\newtheorem{theorem}{Theorem}
\newtheorem{lemma}[theorem]{Lemma}
\newtheorem{conjecture}[theorem]{Conjecture}
\newtheorem{claim}{Claim}
\theoremstyle{definition}
\newtheorem{definition}[theorem]{Definition}
\newtheorem{problem}{Problem}
\newcommand{\oldqed}{}
\def\endofClaim{\hfill\scalebox{.6}{$\Box$}}
\newenvironment{claimproof}[1][Proof]{
  \renewcommand{\oldqed}{\qedsymbol}
  \renewcommand{\qedsymbol}{\endofClaim}
  \begin{proof}[#1]
}{
  \end{proof}
  \renewcommand{\qedsymbol}{\oldqed}
}
\newcommand{\vv}{\mathfrak{v}}
\newcommand{\bigO}{\mathcal O}
\title[On sets not belonging to algebras and rainbow matchings in graphs]{On sets not belonging to algebras and\\ rainbow matchings in graphs}
\author[D. Clemens]{Dennis Clemens}
\address{(DC) Technische Universität Hamburg-Harburg, Institut f\"ur Mathematik, Am Schwarzenberg-Campus 3, 21073 Hamburg, Germany }
\email{dennis.clemens@tuhh.de}
\author[J. Ehrenm\"uller]{Julia Ehrenm\"uller}
\address{(JE) Technische Universität Hamburg-Harburg, Institut f\"ur Mathematik, Am Schwarzenberg-Campus 3, 21073 Hamburg, Germany }
\email{julia.ehrenmueller@tuhh.de}
\author[A. Pokrovskiy]{Alexey Pokrovskiy}
\address{(AP) Freie Universität Berlin, Methods for Discrete Structures, Arnimallee 3, 14195 Berlin, Germany }
\email{alja123@gmail.com}
\begin{document}
\begin{abstract}
Motivated by a question of Grinblat, we study the minimal number $\vv(n)$ that satisfies the following. If $A_1,\ldots, A_n$ are equivalence relations on a set $X$ such that for every $i\in[n]$ there are at least $\vv(n)$ elements whose equivalence classes with respect to $A_i$ are nontrivial, then $A_1, \ldots, A_n$ contain a rainbow matching, i.e.~there exist $2n$ distinct elements $x_1,y_1,\ldots,x_n,y_n\in X$ with $x_i\sim_{A_i} y_i$ for each $i\in [n]$. 
Grinblat asked whether $\vv(n) = 3n-2$ for every $n\geq 4$. The best-known upper bound was $\vv(n) \leq 16n/5 + \bigO(1)$ due to Nivash and Omri. Transferring the problem into the setting of edge-coloured multigraphs, we affirm Grinblat's question asymptotically, i.e.~we show that $\vv(n) = 3n+o(n)$.

%
%

\end{abstract}
\maketitle
\section{Introduction}

Let $X$ be a set and let $\mathcal P(X)$ denote its power set. 
A nonempty subset $\mathcal A \subseteq \mathcal P(X)$ is an \emph{algebra} on $X$ if 
$\mathcal A$ is closed under complementation and under unions, i.e.~if 
$M_1, M_2 \in \mathcal A$, then $X \setminus M_1 \in \mathcal A$ and $M_1 \cup M_2 \in \mathcal A$. 
In a series of papers and books~\cite{grinblat2002algebras,grinblat2004theorems,grinblat2015families}
Grinblat  investigated sufficient conditions for countable families $\{\mathcal A_i\}_i$  
of algebras such that $\bigcup_i A_i \neq \mathcal P(X)$ and $\bigcup_i A_i = \mathcal P(X)$, respectively. 
In this context, Grinblat \cite{grinblat2002algebras} defined $\vv=\vv(n)$ as the minimal cardinal number such that the following is true. 
``Let $\mathcal A_1, \ldots, \mathcal A_n$ be algebras on a set $X$ such that for each $i\in [n]$ there exist at least 
$\vv(n)$ pairwise disjoint sets in $\mathcal P(X) \setminus \mathcal A_i$. Then there exists a family $\{U^1_i, U^2_i\}_{i\in [n]}$ 
of $2n$ pairwise disjoint subsets of $X$ such that, for each $i\in [n]$, if $Q \in \mathcal P(X)$ and $Q$ contains one of the two sets 
$U^1_i$ and $U^2_i$ and its intersection with the other one is empty, then $Q \notin \mathcal A_{i}$.''
In~\cite{grinblat2002algebras} Grinblat showed that $\vv(n) \geq 3n-2$ for each $n\in \mathbb N$.
 He posed the following problem. 
\begin{problem}[Grinblat, \cite{grinblat2015families}]\label{GrinblatProblem}
Is it true that $\vv(n)=3n-2$ for  $n\geq 4$?
\end{problem}

In~\cite{grinblat2015families} Grinblat proved 
that $\vv(n) \leq 10n/3 + \sqrt{2n/3}$. Nivasch and Omri~\cite{nivasch2015rainbow} improved the upper bound to $\vv(n) \leq 16n/5 + \bigO(1)$ using the following 
equivalent definition of $\vv(n)$ in the context of equivalence relations. 
Let $X$ be a finite set and let $A$ be an equivalence relation on $X$. 
If $x,y \in X$ are equivalent  under $A$, we write $x\sim_A y$. 
With \[ [x]_A = \{y \in X: x \sim_A y\}\]
we denote the \emph{equivalence class} under $A$ of an element $x\in X$,  
while the \emph{kernel} of $A$ is defined as \[\ker(A)= \{x\in X: |[x]_A| \geq 2\}.\]
Using these definitions, it turns out that $\vv(n)=\vv_1(n)$ holds, where $\vv_1(n)$ is defined to be the minimal number such that if $A_1, \ldots, A_n$ are equivalence relations with $\ker(A) \geq \vv_1(n)$ for each $i\in [n]$, then $A_1, \ldots, A_n$ contain a \emph{rainbow matching}, i.e. a set of $2n$ distinct elements $x_1,y_1, \ldots, x_n, y_n \in  X$ with $x_i \sim_{A_i} y_i$ for each $i\in [n]$. This identity is mainly based on the fact that there is a natural correspondence between algebras and equivalence relations. Indeed, given an equivalence relation $A$ on the set $X$, we can define the algebra ${\mathcal A}:=\left\{\bigcup_{x\in S}[x]_{A}:\ S\subseteq X\right\}$. Conversely, given some algebra $\mathcal A$ on $X$, one can define the equivalence relation $A$ on $X$ the equivalence classes of which are the inclusion minimal sets in $\mathcal A$. A complete argument to show that $\vv(n)=\vv_1(n)$ is given in the appendix.  


In this paper we show that $\vv(n)\leq 3n+o(n)$, thus giving an asymptotic answer to Problem~\ref{GrinblatProblem}. More precisely, using the terminology of Nivasch and Omri~\cite{nivasch2015rainbow}, we prove the following.  

\begin{theorem}\label{thm:main1}
For every $\delta >0$ there exists $n_0= n_0(\delta) = 144/\delta^2$ such that the following holds for every $n\geq n_0$. 
Let $A_1,\ldots,A_n$ be $n$ equivalence relations on a finite set $X$. If $|\ker(A_i)| \geq (3+\delta)n$ for each $i\in[n]$, 
then $A_1,\ldots,A_n$ contain a rainbow matching. 
\end{theorem}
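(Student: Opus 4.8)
The plan is to pass to an edge-coloured (multi)graph. For each colour $i\in[n]$ let $G_i$ be the graph on $X$ whose edges are the pairs lying inside a common nontrivial class of $A_i$ (so $G_i$ is a disjoint union of cliques), and form the coloured multigraph $G=\bigcup_i G_i$; a rainbow matching of $A_1,\dots,A_n$ is exactly a rainbow matching of size $n$ in $G$. The only features of the colours I would use are that they are ``rich'' — a class of size $k$ contributes $\lfloor k/2\rfloor\ge k/3$ independent edges, so $\nu(G_i)\ge|\ker(A_i)|/3\ge(1+\delta/3)n$ — and, crucially, that the full clique structure is kept: whatever set $S$ of vertices is already used, a class of $A_i$ fails to supply an edge avoiding $S$ only if it has at most one vertex outside $S$. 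I would then fix a \emph{maximum} rainbow matching $R$ in $G$, say $|R|=m$ with colour set $I\subseteq[n]$, and show that $m<n$ leads to a contradiction.

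So suppose $m<n$ and pick $j\in[n]\setminus I$. By maximality, every nontrivial class of $A_j$ has at most one vertex outside $V(R)$, so if $q$ denotes the number of nontrivial classes of $A_j$ then $|\ker(A_j)\setminus V(R)|\le q$, whence $q\ge|\ker(A_j)|-|V(R)|\ge|\ker(A_j)|-2m$. Since $m\le n-1$, $|\ker(A_j)|\ge(3+\delta)n$, and $n\ge n_0=144/\delta^2$, this surplus is at least $12\sqrt n$, i.e. $q\ge m+12\sqrt n$. The $q$ classes are pairwise disjoint and each meets $V(R)$, so at least $m+12\sqrt n$ of the $2m$ vertices of $V(R)$ lie in nontrivial classes of $A_j$, and hence at least $12\sqrt n$ edges $e\in R$ have \emph{both} endpoints inside classes of $A_j$. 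Each such edge yields a local exchange: replace $e$ (of colour $i_e$) by the pair consisting of one endpoint of $e$ together with a free vertex of the class containing it — or, when both endpoints lie in the same class of $A_j$, by $e$ itself recoloured by $j$ — obtaining a rainbow matching of the same size $m$ in which $j$ has become used and $i_e$ the new unused colour, while $V(R)$ changes by at most one vertex. Since every colour still has kernel of size $\ge(3+\delta)n$ and the matching size is unchanged, the identical count applies to the new unused colour, and so on.

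This sets up an alternating search. From $j$ one reaches, in one exchange, at least $12\sqrt n$ distinct new unused colours (the colours $i_e$ are distinct because $R$ is rainbow); from each of those one again reaches at least $12\sqrt n$ colours. All the edges exchanged in these two rounds lie among fewer than $2n$ edges in total (the edges of $R$ plus one new edge per first‑round exchange), while $(12\sqrt n)^2=144n$ far exceeds this, so some edge $e'$ is reached at depth two along more than enough essentially different alternating walks from $j$. Two such walks that are vertex‑disjoint off $e'$ — available since each exchange disturbs only $O(1)$ vertices whereas at least $(1+\delta)n$ vertices lie outside $V(R)$ — can be spliced into a single alternating structure whose net effect is to insert an edge of colour $j$ without evicting any colour, i.e. a rainbow matching of size $m+1$, contradicting maximality. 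Hence $m=n$.

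The hard part will be the third paragraph. One must organise the alternating search so that the exchanges genuinely compose (keeping the vertices that successive moves free up or use up disjoint), check that a double reach really produces an augmentation rather than merely a lateral move, and verify that the various lossy estimates — the $k/3$ bound, the two pigeonhole steps, and the disjointness budget used in the splicing — compound to a constant for which a surplus of $12\sqrt n$, hence the threshold $n_0=144/\delta^2$, is enough. By contrast, the reduction in the first paragraph and the counting in the second are routine; essentially all the work is in making the augmentation rigorous.
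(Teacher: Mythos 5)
The reduction to edge-coloured multigraphs and the counting in your second paragraph are essentially sound and parallel the paper's setup (your ``local exchanges'' are the paper's length-one \emph{switchings}); note only that an edge $e\in R$ with both endpoints in $\ker(A_j)$ but in \emph{different} classes of $A_j$ need not be exchangeable, since both classes could be entirely contained in $V(R)$ --- the paper handles this with a more careful ``$c$-good'' count. The genuine gap is your third paragraph, and it is not merely a matter of bookkeeping: the mechanism you propose does not produce an augmentation. If two depth-two walks from $j$, say $j\to i_1\to c'$ and $j\to i_2\to c'$, meet at a common matching edge $e'$ of colour $c'$, then splicing them means deleting $e_1,e_2,e'$ and inserting a $j$-edge, an $i_1$-edge and an $i_2$-edge; both endpoints of $e'$ are consumed by the two incoming edges, so $e'$ cannot be retained, and the net effect is a size-preserving swap of colour $c'$ for colour $j$ --- a lateral move, not a matching of size $m+1$. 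An augmentation in this framework requires reaching, at the end of some chain, a colour possessing an edge with \emph{both} endpoints free, and no collision count of the form $(12\sqrt n)^2>2n$ forces that to happen: in the extremal configuration ($n-1$ identical triangles) every colour reaches every matching edge at depth two, yet no augmentation exists.

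The paper's actual argument is structurally different and cannot be compressed to depth two. Its Lemma~3 is an induction on $n$ whose hypothesis quantifies over $(c_0,c)$-switchings of \emph{arbitrary} length $\ell$, demanding $\lceil(1+\delta)n\rceil-4\ell$ disjoint $c$-edges leaving the free vertices; the inductive step finds one length-two switching $\sigma=(e_0,m_1,e_1,m_2)$ together with $\lceil\delta n/6\rceil$ colours all having edges from the free set into the single vertex $m_2\setminus e_1$, deletes those colours and their matching edges, and recurses on the smaller multigraph. The augmentation is only assembled at the bottom of a recursion of unbounded depth, and the $-4\ell(\sigma)$ slack in the hypothesis is exactly what survives concatenation of switchings across levels. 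Your sketch has no analogue of this invariant, so the step from ``many colliding walks'' to ``rainbow matching of size $m+1$'' is unsupported; as written, the proof does not go through.
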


Theorem~\ref{thm:main1} can be rephrased in the context of graphs. If $A_1, \ldots, A_n$ are equivalence relations on a set $X$, let the vertices of an edge-coloured multigraph be the elements of $X$ and,  for each $i \in [n]$, let $\{x,y\}\in \binom{X}{2}$ be an edge of colour $i$ if and only if $x\sim_{A_i} y$. This means that the equivalence relations are represented in this multigraph by colour classes, each of which is the disjoint union of nontrivial cliques, i.e.~complete graphs with at least 2 vertices. A matching in an edge-coloured multigraph is called \emph{rainbow matching} if all its edges have distinct colours. Using this notion, we can reformulate Theorem~\ref{thm:main1} as follows. 

\begin{theorem}\label{thm:main2}
For every $\delta >0$ there exists $n_0= n_0(\delta) = 144/\delta^2$ such that the following holds for every $n\geq n_0$. 
Let $G$ be a multigraph, the edges of which are coloured with $n$ colours 
and each subgraph of which induced by a colour class has at least $(3+\delta)n$ vertices and is the disjoint union of nontrivial cliques. 
Then $G$ contains a rainbow matching of size $n$.  
\end{theorem}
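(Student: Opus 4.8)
The plan is to argue by contradiction. Let $M$ be a rainbow matching of maximum size $m$ in $G$, suppose $m\le n-1$, and write $U=V(M)$, so that $|U|=2m\le 2(n-1)$. Fix a colour $c_0$ not used by $M$. Since the colour-$c_0$ subgraph is a disjoint union of nontrivial cliques and $M$ is maximum, no colour-$c_0$ clique contains two vertices of $X\setminus U$ (such a pair would be an edge extending $M$); together with $|\ker(A_{c_0})\setminus U|\ge (3+\delta)n-2m$ this shows that at least $(3+\delta)n-2m$ distinct colour-$c_0$ cliques each meet $X\setminus U$ in exactly one vertex and $U$ in at least one vertex, and the same holds for every colour absent from $M$.

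Next I would isolate the augmenting move. If $\{a,b\}\in M$ has colour $j$ and $a$ lies, in some colour $c$, in a clique that also contains an unmatched vertex $p$, then replacing $\{a,b\}$ by the colour-$c$ edge $\{a,p\}$ yields another maximum rainbow matching, now missing colour $j$, in which $b$ has become unmatched. Chaining such swaps and finishing by actually inserting an edge would increase the matching by one. I would encode one step of this in an auxiliary digraph $D$ on the vertex set $U$: from an endpoint $a$ of an edge $\{a,b\}\in M$ of colour $j$, draw an arc to every $a'\in U\setminus\{b\}$ with $a'\sim_j b$. Let $S_{\mathrm{in}}\subseteq U$ be the set of vertices that lie in a clique of some colour absent from $M$ which also contains an unmatched vertex, and let $S_{\mathrm{out}}\subseteq U$ be the set of those endpoints $a$ of an edge $\{a,b\}\in M$ of colour $j$ from which the chain can be closed off with a net gain of one edge: because $b$ lies with an unmatched vertex in a clique of colour $j$, or in a clique of a colour absent from $M$, or because colour $j$ has a clique with two unmatched vertices. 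A directed walk in $D$ from $S_{\mathrm{in}}$ to $S_{\mathrm{out}}$, after being shortened to a path, translates into a sequence of swaps as above that produces a rainbow matching of size $m+1$; hence, by maximality of $M$, no such walk exists.

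It remains to turn this into a contradiction. Let $R\supseteq S_{\mathrm{in}}$ be the set of vertices of $U$ reachable from $S_{\mathrm{in}}$ in $D$; by construction $R$ is closed under out-arcs, and by the previous paragraph $R\cap S_{\mathrm{out}}=\emptyset$. Consider $a\in R$ with matching edge $\{a,b\}$ of colour $j$. Disjointness from $S_{\mathrm{out}}$ forces the colour-$j$ clique through $b$ to lie entirely in $U$, and --- the \emph{key point} --- it forces $b\notin S_{\mathrm{in}}$: otherwise $b$ would sit with an unmatched vertex in a clique of an absent colour, which would let the chain be closed off at $a$ and put $a\in S_{\mathrm{out}}$. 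Using this last fact once more (if the partner of some $a\in R$ lay in $R$, applying it to that partner would force $a\notin S_{\mathrm{in}}$, and no matching edge can have both endpoints in $S_{\mathrm{in}}$), the partners of the vertices of $S_{\mathrm{in}}$ form a set $T\subseteq U$ with $|T|=|S_{\mathrm{in}}|$ and $T\cap R=\emptyset$. Thus $R$ and $T$ are disjoint subsets of $U$, each of size at least $|S_{\mathrm{in}}|\ge (3+\delta)n-2m$, so $2\bigl((3+\delta)n-2m\bigr)\le |U|=2m$, i.e.~$m\ge (1+\delta/3)n>n$, contradicting $m\le n-1$.

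The hard part will be making the closing-off of the augmenting chain fully rigorous, and this is also where the hypothesis $n\ge 144/\delta^2$ enters. One must check that the unmatched vertices and the absent colours used at the two ends of a chain are genuinely distinct; in particular, when only one or a few colours are absent from $M$ one cannot produce a fresh absent colour at the end of the chain, so the regime of small deficiency $n-m$ needs a separate, more careful treatment. What makes everything go through is the slack of $\delta n$ between $(3+\delta)n-2m$ and $2m$: it leaves room to discard the handful of ``bad'' unmatched vertices and to absorb the lower-order losses incurred while iterating the construction, which is why the threshold for $n$ has the shape $c/\delta^2$.
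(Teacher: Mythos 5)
Your final counting would prove far more than the theorem: it yields $m\ge(1+\delta/3)n$ for \emph{every} $n$, with no lower bound on $n$ and no use of the hypothesis $n\ge 144/\delta^2$, i.e.\ it would give $\vv(n)\le 3n+O(1)$ outright. That is a strong signal that the gap you flag at the end is not a technicality but the crux, and indeed it is fatal where it matters most. The whole argument reduces to (and cannot avoid) the case $m=n-1$, where exactly one colour $c_0$ is absent from $M$. In that case the second closing condition in your definition of $S_{\mathrm{out}}$ --- ``$b$ lies with an unmatched vertex in a clique of a colour absent from $M$'' --- can only ever be witnessed by $c_0$ itself, and $c_0$ has already been consumed at the $S_{\mathrm{in}}$ end of the chain to match $p_0$ to $a_1$. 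Consequently the \emph{key point} ($a\in R$ and partner $b\in S_{\mathrm{in}}$ implies $a\in S_{\mathrm{out}}$) is false: the augmentation it invokes would use colour $c_0$ on two edges. Without it you lose both $T\cap R=\emptyset$ and the fact that no matching edge has two endpoints in $S_{\mathrm{in}}$, and the count degrades to $|S_{\mathrm{in}}|\le 2m$, i.e.\ $m\ge(3+\delta)n/4$, which is no contradiction. Even when several colours are absent, membership in $S_{\mathrm{in}}$ or $S_{\mathrm{out}}$ only supplies \emph{one} witness pair (colour, unmatched vertex), which may coincide with the pair used at the other end of the chain, so the set-based reachability argument does not close. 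There are further unaddressed disjointness problems: a directed path in $D$ may visit both endpoints of a matching edge, and the unmatched vertices witnessing the two ends may coincide; these are exactly the interactions that force the paper to define switchings with an explicit disjointness condition and to pay a $-4\ell(\sigma)$ loss in every count.

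For comparison, the paper does not run a global augmenting-path argument at all. It proves a lemma by induction on $n$: given the matching $M$ of size $n-1$ and the single missing colour $c_0$, it constructs one short $(c_0,c_2)$-switching of length $2$ together with a set of $\lceil\delta n/6\rceil$ colours whose edges can be matched into a small reserved vertex set; it then deletes those colours and vertices and applies induction to the remaining instance, checking that the relative slack $\delta'$ \emph{grows} (this is where $n\ge 144/\delta^2$, via $\delta\sqrt{n}\ge 12$, is needed). The scarcity of absent colours is thus circumvented by trading $\Theta(\delta n)$ colours for a strictly easier subproblem rather than by chaining swaps to a second absent colour. If you want to salvage your approach, you would need a mechanism that manufactures a usable second ``exit'' in the one-absent-colour regime --- which is precisely the content of the paper's Claims~\ref{claim:first} and~\ref{claim:second}, not a lower-order loss absorbed by slack.
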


Theorem \ref{thm:main2} is a strengthening of an earlier result by the first two authors \cite{clemens2015}, which proves the above statement when the multigraph $G$ is bipartite (and thus each clique consists of two vertices). The latter was motivated by famous conjectures of Ryser~\cite{ryser1967} and of Brualdi and Stein \cite{brualdi1991,stein1975} on Latin squares and by the following conjecture of Aharoni and Berger \cite{aharoni2009}.

\begin{conjecture}[Aharoni and Berger \cite{aharoni2009}]
Let $G$ be a multigraph, the edges of which are coloured with $n$ colours 
and such that each colour class induces a matching of size $n+1$. 
Then there is a rainbow matching of size $n$.
\end{conjecture}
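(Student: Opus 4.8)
The plan is to argue by contradiction and to push the now-standard augmentation machinery for rainbow matchings as far as it will go. Suppose $G$ is a counterexample and, among all rainbow matchings in $G$, choose one, $M$, of maximum size; by assumption $|M|=m\le n-1$, so some colour $c_0$ does not appear on $M$. Every colour-$c_0$ edge must meet $V(M)$, since otherwise it could be added to $M$; as the colour-$c_0$ class is a matching on at least $n+1$ edges while $|V(M)|=2m\le 2n-2$, this becomes a real constraint precisely in the regime $m$ close to $n$, which is the case one has to win. The first step is therefore to set up \emph{rainbow alternating walks}: from an unused colour $c_0$ and a colour-$c_0$ edge $e_0=\{a_0,b_0\}$ blocked by a matching edge $f_1=\{a_0,a_1\}$ of colour $c_1$, replace $f_1$ by $e_0$ to obtain a new maximum rainbow matching $M'=M-f_1+e_0$ in which $c_1$ is now free and $a_1$ is exposed, and iterate from a blocked colour-$c_1$ edge, and so on. Each step either closes up into a rainbow matching of size $m+1$ — contradicting maximality — or prolongs the walk, and the bookkeeping of which colours are currently free and which vertices are exposed is the engine of the argument.

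Next I would organise these walks into a branching (flow-like) structure rather than a single path, in the spirit of the argument behind Theorem~\ref{thm:main2}, so as to handle the cycles and repetitions that a naive single walk runs into. The key quantitative input is that each colour class is a \emph{large} matching: with $n$ colours and $|M|=m$, on average a colour class loses only $O(m/n)$ of its edges to $V(M)$, so there is a large reservoir of colour classes almost all of whose edges avoid $V(M)$, and many exposed vertices are reachable by short alternating walks. The heart of the proof is then a counting/potential estimate showing that if no augmentation exists, the alternating structure rooted at the free colours must be ``saturated'' in a rigid way — essentially every colour class is almost entirely absorbed by $V(M)$ together with the walks — which both forces $m$ to be large and pins $M$ down to a near-extremal configuration resembling the conjectured extremal Latin squares of Ryser \cite{ryser1967} and of Brualdi and Stein \cite{brualdi1991,stein1975}. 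One would finish either by a direct analysis of that near-extremal configuration or by a double count of triples (free colour, blocked edge, matching edge) that is incompatible with all colour classes having size $n+1$.

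I expect the main obstacle to be exactly the absence of slack. The same scheme run with a multiplicative error already gives only the far weaker conclusion that matchings of size $(\tfrac{3}{2}+\delta)n$ suffice — this is the bipartite specialisation of Theorem~\ref{thm:main2} and of \cite{clemens2015}, with $2$-cliques in place of general cliques — and an additive $o(n)$ error is what current techniques (iterated augmentation, or topological Hall-type theorems on independence complexes) appear able to deliver. Reaching the tight value $n+1$, which multigraph examples show cannot be lowered, demands controlling the alternating structure with essentially no waste and resolving the Latin-square-type extremal configurations exactly, and the present toolkit does not obviously suffice for this. The realistic intermediate target, and the natural next step suggested by this paper, is the asymptotic version — matchings of size $n+o(n)$ forcing a rainbow matching of size $n$ — obtained by transferring the clique-absorption argument of Theorem~\ref{thm:main2} to the matching setting and optimising constants; the exact conjecture of Aharoni and Berger \cite{aharoni2009} would then presumably require an additional stability argument that isolates and rules out the extremal structure.
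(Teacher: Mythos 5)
There is a fundamental gap here: what you have written is a research programme, not a proof, and the statement itself is not something the paper proves --- it is the Aharoni--Berger conjecture \cite{aharoni2009}, which the paper explicitly records as remaining widely open and only uses as motivation for Theorem~\ref{thm:main2}. The load-bearing step of your sketch --- the ``counting/potential estimate showing that if no augmentation exists, the alternating structure rooted at the free colours must be saturated in a rigid way,'' followed by a resolution of the resulting near-extremal Latin-square-type configurations --- is asserted but never carried out, and no argument is given for why such rigidity should force a contradiction with colour classes of size exactly $n+1$. Your own closing paragraph concedes the point: the augmentation/switching machinery of the kind used for Lemma~\ref{thm:main3} and Theorem~\ref{thm:main2} inherently loses a constant or $o(n)$ amount of slack at each stage (vertices of $V(\sigma)$, the set $S$, the colour set $C^\ast$ are all discarded), which is exactly why the paper only obtains $3n+o(n)$ in the clique setting and why the bipartite specialisation gives matchings of size roughly $\tfrac{3}{2}n$ rather than $n+1$. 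An argument with zero slack would need genuinely new ideas (e.g.\ exact stability for the extremal configurations), and nothing in the proposal supplies them.

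To be clear about the comparison you were implicitly invited to make: the paper's actual contribution for which a proof exists is Theorem~\ref{thm:main2} (equivalently Theorem~\ref{thm:main1}), proved via the switching Lemma~\ref{thm:main3} and induction on $n$; your sketch borrows the same alternating/switching philosophy but aims it at the exact conjecture, where the method, as you yourself note, is not known to close the gap. If your goal is a correct, self-contained result, the honest restatement of your plan is the asymptotic bipartite/matching version already covered by \cite{pokrovskiy2015} and by the $(3+\delta)n$ clique version of this paper --- not the conjecture as stated, for which no proof is offered either by you or by the paper.
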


These conjectures remain widely open. However, asymptotic versions are known to be true. 
For instance, as a consequence of a theorem of H\"aggkvist and Johansson \cite{haggkvist2008} one obtains that there is a rainbow matching of size $n$ in case when $G$ is an edge-coloured bipartite graph the   
colour classes of which induce perfect matchings of size $n+o(n)$. The third author \cite{pokrovskiy2015} provided a proof for the more general case where the matchings are disjoint, but not necessarily  perfect.

In the next section we prove Theorem~\ref{thm:main2} which automatically provides a proof for Theorem~\ref{thm:main1}.  
As already mentioned, the best-known lower bound on $\vv(n)$ is $3n-2$ for each $n\geq 4$. Indeed, if all colour classes are identical and are the disjoint union of $n-1$ triangles, then there is no rainbow matching of size $n$. Hence, Theorem~\ref{thm:main2} is asymptotically best possible. 
If $n=3$, then $\vv(3)=9 > 3n-2$ as shown by Grinblat~\cite{grinblat2002algebras}. See Figure~\ref{fig:n3} for the lower bound $\vv(3) \geq 9$, which was also observed by Nivasch and Omri~\cite{nivasch2015rainbow}.

\begin{figure}[h]
\begin{center}
\includegraphics[scale=0.7]{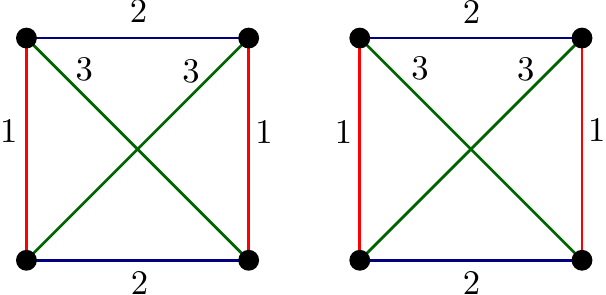} 
\caption{Example of a graph with $3$ colour classes each of size $8$ that has no rainbow matching of size $3$. }
\label{fig:n3}
\end{center}
\end{figure}

\section{Proof of Theorem~\ref{thm:main2}}

The aim of this section is to present the proof of Theorem~\ref{thm:main2}. 
We will start with stating and proving a lemma, which will be the essential part in the proof of Theorem~\ref{thm:main2}.  
However, we first need to introduce some necessary definitions and notation. 

Given a multigraph $G$, we denote by $E_G[A,B]$ the set of edges in $G$ that have one endvertex in $A \subseteq V$ and one endvertex in $B\subseteq V$. 
For any edge-coloured multigraph $G$, we denote by $c(e)$ the colour assigned to the edge $e \in E(G)$. 
For the sake of simplicity, we call an edge of colour $c$ simply \emph{$c$-edge}. 
Let $F$ be a set or a sequence of edges, then $V(F):= \bigcup_{e\in F} e$ is the \emph{vertex set} of $F$.  
Next we define switchings, which, given some rainbow matching $M$ of size $k$, provide us a new rainbow matching of size $k$ by replacing edges in $M$ with edges in $E(G)\setminus M$. See Figure~\ref{fig:switching} for an illustration of a switching of length $3$. 

\begin{definition}
Let $G$ be an edge-coloured multigraph and let $M$ be a rainbow matching in $G$. 
We call a sequence of edges $\sigma = (e_0, m_1, e_1, m_2, \ldots, e_{k-1}, m_k)$ 
a \emph{$\big(c(e_0), c(m_k)\big)$-switching} of length $k$ with respect to $M$  if for each $i\neq j\in[k]$ we have 
\begin{enumerate}
\item[(S1)] $m_1, \dots, m_k$ are distinct edges in  $M$,
\item[(S2)] 
$e_{i-1} \in E_G\big[m_i, V\setminus V(M)\big]$, 
\item[(S3)] $c(e_0) \neq c(m_i)$ and $c(e_i) = c(m_i)$, and 
\item[(S4)] $e_{i-1} \cap e_{j-1} = \varnothing$. 
\end{enumerate}
Whenever it is clear from the context, we omit writing with respect to which matching a switching is defined. The length of $\sigma$ is denoted by $\ell(\sigma)$. Furthermore, we denote by $m(\sigma)$ the set of all edges of $\sigma$ that are 
contained in the matching $M$ and by $e(\sigma)$ the set of all other edges of $\sigma$. Observe that $\ell(\sigma) = |m(\sigma)| = |e(\sigma)|$.

For every colour $c$, we also define an \emph{empty} $(c,c)$-switching $\sigma_c^0$. This switching has no edges, starts and ends at the colour $c$, has length zero, and has $m(\sigma_c^0)=e(\sigma_c^0)=\emptyset$.
\end{definition}

\begin{figure}[h]
\begin{center}
\includegraphics[scale=0.7]{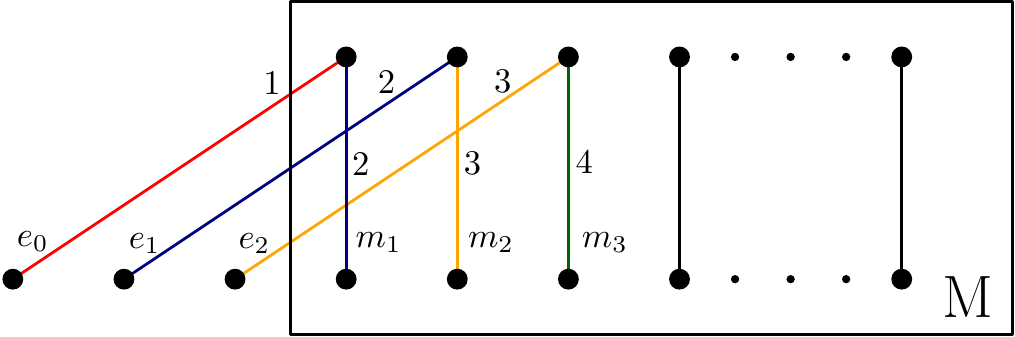} 
\caption{A $(1,4)$-switching of length $3$. }
\label{fig:switching}
\end{center}
\end{figure}

Now we are in the position to state and prove Lemma~\ref{thm:main3} which is the main technical result of the paper. 

\begin{lemma}\label{thm:main3}
For each $n\in \mathbb{N}$ and $\delta >0$  satisfying $\delta\sqrt{n} \geq 12$, the following holds.
Let $G=(V,E)$ be a multigraph whose edges are coloured by $n$ colours, $M$ a rainbow matching of size $n-1$ in $G$, and $c_0$ the colour that is missing from $M$. 

Suppose that for every colour $c$ in $G$, and every $(c_0,c)$-switching $\sigma$ there are at least $\big(\lceil(1+\delta)n\rceil - 4\ell(\sigma)$ disjoint $c$-edges between $V\setminus \big(V(M) \cup V(\sigma)\big)$ and $V\setminus V(\sigma)$.  
Then $G$ has a rainbow matching of size $n$. 
\end{lemma}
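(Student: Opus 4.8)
The plan is to argue by contradiction, using an extremal/greedy choice of switchings. Suppose $G$ has no rainbow matching of size $n$. Fix the rainbow matching $M$ of size $n-1$ missing colour $c_0$. For each colour $c$ and each $(c_0,c)$-switching $\sigma$ with respect to $M$, one obtains from $\sigma$ a new rainbow matching $M_\sigma$ of size $n-1$ by deleting the edges $m(\sigma)$ from $M$ and adding the edges $e_0,e_1,\ldots,e_{k-1}$; the colour missing from $M_\sigma$ is then $c$ (this is exactly what conditions (S1)--(S4) are designed to guarantee: the new edge set is a matching because the $e_{i-1}$ are pairwise disjoint and avoid $V(M)$ outside of $m(\sigma)$, and the colours work out because $c(e_i)=c(m_i)$ replaces each removed colour while $c(e_0)$ is fresh). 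Crucially, $V(M_\sigma)\subseteq V(M)\cup V(\sigma)$. So if there were a $c$-edge joining two vertices of $V\setminus(V(M)\cup V(\sigma))$, we could add it to $M_\sigma$ and get a rainbow matching of size $n$, a contradiction. More generally, if there is a $c$-edge $f$ disjoint from $V(\sigma)$ with at least one endpoint outside $V(M)\cup V(\sigma)$ — say $f=\{x,y\}$ with $x\notin V(M)\cup V(\sigma)$ — then either $y\notin V(M_\sigma)$ and we are done as before, or $y\in V(M_\sigma)\setminus V(\sigma)$, meaning $y$ lies on some matching edge $m\in M\setminus m(\sigma)$; in the latter case $(e_0,\ldots,m_k, f, m)$ extends $\sigma$ to a longer $(c_0,c(m))$-switching. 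This is the key structural observation that lets the hypothesis of the lemma feed an induction on switching length.

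Concretely, I would prove the following by downward induction on $\ell$ (from $\ell=\lceil(1+\delta)n\rceil/4$ or so down to $0$): for every colour $c$ and every $(c_0,c)$-switching $\sigma$ with $\ell(\sigma)=\ell$, there exists a strictly longer $(c_0,c')$-switching $\sigma'$ (for some colour $c'$) with $\sigma$ as a prefix. Given $\sigma$ of length $\ell$, the hypothesis yields at least $\lceil(1+\delta)n\rceil-4\ell$ disjoint $c$-edges between $V\setminus(V(M)\cup V(\sigma))$ and $V\setminus V(\sigma)$. By the observation above, no such edge can have both endpoints outside $V(M)\cup V(\sigma)$ (else size-$n$ rainbow matching), so each of these $c$-edges has its second endpoint in $V(M_\sigma)\setminus V(\sigma)$, i.e.\ on a matching edge of $M\setminus m(\sigma)$ avoiding $V(\sigma)$; there are at most $n-1-\ell$ such matching edges, each meeting at most a bounded number of our disjoint $c$-edges (in fact, since the $c$-edges are pairwise disjoint, at most $2$ per matching edge). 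Counting forces $\lceil(1+\delta)n\rceil - 4\ell \le 2(n-1-\ell)$, hence roughly $\ell \ge (\delta n)/2$; and picking any one such $c$-edge $f$ together with the matching edge $m$ it hits gives the extension $\sigma' = (e_0,m_1,\ldots,m_\ell, f, m)$ of length $\ell+1$. The inequality $\delta\sqrt n \ge 12$ is what makes the arithmetic consistent and guarantees we never run out of room before reaching a contradiction: iterating the extension from the empty switching $\sigma_{c_0}^0$ produces switchings of unbounded length, which is absurd since a switching has at most $n-1$ matching edges.

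The only subtle points, which I would check carefully rather than the counting, are: (i) verifying that the extension $\sigma'=(e_0,\ldots,m_\ell,f,m)$ genuinely satisfies (S1)--(S4) — in particular that $f$ is disjoint from all previously chosen $e_{i-1}$ (this is why we take the $c$-edges between $V\setminus(V(M)\cup V(\sigma))$ and $V\setminus V(\sigma)$, so $f$ automatically avoids $V(\sigma)\supseteq e(\sigma)$) and that $m\notin m(\sigma)$ and $c(m)\neq c_0$ (true since $c_0$ is missing from $M$); (ii) the base step, applying the hypothesis to $\sigma_{c_0}^0$, which gives many $c_0$-edges — wait, one must start the switching, so here instead one applies the hypothesis with $c=c_0$: the empty switching $\sigma^0_{c_0}$ has $\ell=0$, so there are at least $\lceil(1+\delta)n\rceil$ disjoint $c_0$-edges between $V\setminus V(M)$ and $V$; since $c_0\notin c(M)$, if any such edge lay entirely outside $V(M)$ we would be done immediately, so each hits $V(M)$, and picking one gives a $(c_0,c)$-switching of length $1$. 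I expect the main obstacle to be bookkeeping the disjointness condition (S4) through the induction and getting the constants in the inequality $\lceil(1+\delta)n\rceil - 4\ell(\sigma)$ to line up so the downward induction terminates in a contradiction exactly when $\delta\sqrt n\ge 12$; everything else is the structural dichotomy "an extra $c$-edge either finishes the matching or lengthens the switching," which does all the real work.
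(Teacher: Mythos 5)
The structural observations you make are sound and do underlie the paper's argument: a $(c_0,c)$-switching $\sigma$ converts $M$ into a rainbow matching $M_\sigma$ of size $n-1$ missing colour $c$ with $V(M_\sigma)\subseteq V(M)\cup V(\sigma)$, and any $c$-edge avoiding $V(\sigma)$ with an endpoint outside $V(M)\cup V(\sigma)$ either completes a rainbow matching of size $n$ or lands on an edge of $M\setminus m(\sigma)$ and extends the switching. But the contradiction mechanism you build on top of this fails, in two places. First, the counting is wrong: $\lceil(1+\delta)n\rceil-4\ell\le 2(n-1-\ell)$ rearranges to $\ell\ge\big((\delta-1)n+2\big)/2$, not $\ell\ge\delta n/2$; for $\delta\le 1$ (the only interesting regime, since $\delta\ge 1$ is trivial) this is vacuous and gives no information. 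Second, and more fundamentally, the extension step is only available while $\lceil(1+\delta)n\rceil-4\ell(\sigma)\ge 1$, i.e.\ while $\ell(\sigma)<(1+\delta)n/4\le n/2$. The iteration from the empty switching therefore stalls at length roughly $n/4$, far below the trivial ceiling $\ell\le n-1$, so ``switchings of unbounded length'' never arise and no contradiction is reached. The hypothesis degrades by $4$ per unit of switching length precisely fast enough that a single growing switching cannot do the job.

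This is exactly why the paper's proof takes a different route: an induction on $n$. One first finds (Claim 1) a \emph{short} switching $\sigma=(e_0,m_1,e_1,m_2)$ of length $2$ together with a large set $C^\ast$ of $\lceil\delta n/6\rceil$ colours, each of which has an edge from $V\setminus(V(M)\cup V(\sigma))$ to the free endpoint of $m_2$ — i.e.\ a whole bundle of colours that can be ``completed'' simultaneously through one matching edge. This uses a double-counting/pigeonhole step over length-$1$ switchings and ``$c$-good'' matching edges that has no counterpart in your plan. One then deletes these colours, the corresponding matching edges, and the touched vertices, verifies (Claim 2) that the remaining multigraph satisfies the lemma's hypothesis with new parameters $n'=\lfloor n(1-\delta/6)\rfloor-2$ and $\delta'\ge 12/\sqrt{n'}$ (this is where $\delta\sqrt n\ge 12$ is really used), and applies the induction hypothesis. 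To repair your proposal you would need some mechanism for removing a positive fraction of the colours at once rather than one colour per unit of switching length; as written, the plan cannot close.
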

An important special case of the condition in Lemma~\ref{thm:main3} is when $c=c_0$ and $\sigma$ is the empty switching $\sigma_{c_0}^0$. In this case the condition says that there are at least $\lceil(1+\delta)n\rceil$ disjoint $c_0$-edges touching  $V\setminus V(M)$.

\begin{proof}[Proof of Lemma~\ref{thm:main3}]
Let $C$ be the set of colours of edges of $G$ and $R:= V\setminus V(M)$.  We prove Lemma~\ref{thm:main3} by induction on $n$. 
For the initial case, we prove the theorem for all $n\leq 144$.
Notice that if $n\leq 144$, then from $\delta\sqrt{n} \geq 12$, we obtain  $\delta\geq 1$. This means in particular that there are $2n$ disjoint edges of colour $c_0$ in $E_G[R, V]$. However, there can be at most $|V(M)|=2n-2$ disjoint $c_0$-edges in $E_G[R,V(M)]$. Hence, there exists a $c_0$-edge in $E_G[R,R]$ which can be added to $M$ in order to obtain a rainbow matching of size $n$. 

Now let $n > 144$ and assume that Lemma~\ref{thm:main3} holds for every $n'< n$. We may also assume  that $\delta \leq 1$ since otherwise there is a rainbow matching of size $n$ by the same argument as before. 
Let $G=(V,E)$ be a multigraph and $M$ a rainbow matching of size $n-1$ in $G$, which satisfies all the assumptions of the lemma. Suppose for the sake of contradiction that $G$ does not have a rainbow matching of size $n$.
The following claim produces a switching, a set of colours, and a set of edges that will later be used to reduce the problem to a smaller multigraph, to which we apply induction. 

\begin{claim}\label{claim:first}
There exist a colour $c_2\in C$, a $(c_0,c_2)$-switching $\sigma = (e_0, m_1, e_1, m_2)$ and 
a subset $C^\ast \subseteq C\setminus \{c_0,c_1,c_2\}$, where $c_1:= c(m_1)$, with $|C^\ast| = \lceil\delta n/6\rceil$, such that for each $c \in C^\ast$ there exists a $c$-edge $e_c$ between $V\setminus \big(V(M)\cup V(\sigma)\big)$ and $m_2\setminus e_1$. 
\end{claim}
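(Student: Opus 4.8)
The plan is to build the switching $\sigma$ greedily in two rounds and then use a counting argument to extract the large colour set $C^\ast$. The starting point is the special case of the hypothesis noted right after the lemma: there are at least $\lceil(1+\delta)n\rceil$ disjoint $c_0$-edges touching $R=V\setminus V(M)$. Since at most $n-1$ of these can have their other endpoint inside $V(M)$ (one per matching edge, as they are disjoint), and since we are assuming $G$ has no rainbow matching of size $n$ (so no $c_0$-edge lies inside $R$), in fact essentially all of these $c_0$-edges run between $R$ and $V(M)$, and there are at least $\lceil(1+\delta)n\rceil - (n-1) \geq \delta n$ of them... but more importantly we get many choices of $m_1 \in M$ that receive a $c_0$-edge $e_0$ from $R$. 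Fix one such edge $e_0 = (r_0, v_1)$ with $v_1 \in m_1 \in M$, and set $c_1 = c(m_1)$, $e_1 := m_1$ (so $c(e_1) = c(m_1) = c_1$, satisfying (S3) for $i=1$). Now $\sigma' = (e_0, m_1, e_1)$ is a partial $(c_0, c_1)$-switching of length $1$.

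**Next I would** apply the hypothesis again, this time to the colour $c_1$ and the switching $\sigma'$ of length $1$: there are at least $\lceil(1+\delta)n\rceil - 4$ disjoint $c_1$-edges between $V \setminus (V(M) \cup V(\sigma'))$ and $V \setminus V(\sigma')$. Of these, at most $n-2$ have an endpoint in $V(M) \setminus \{v_1, v_1'\}$ (where $m_1 = \{v_1, v_1'\}$), so at least $\delta n - O(1)$ of them either lie entirely in $R \setminus \{r_0, \dots\}$ or touch $R$ with the other endpoint... Here I must be slightly careful: if such a $c_1$-edge lies inside $R$ and is disjoint from $e_0$ and $m_1$, then swapping $m_1$ out of $M$, adding $e_0$ and this $c_1$-edge gives a rainbow matching of size $n$, a contradiction. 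So each of these $\geq \delta n - O(1)$ disjoint $c_1$-edges touches $V(M)$; discarding the at most two touching $m_1$, at least $\delta n - O(1)$ of them run between $R$ and distinct matching edges $m_2 \neq m_1$. Pick one, call it $e_1'$ — wait, this should be $m_2$'s incoming edge; rename: for each such edge we get a distinct $m_2 \in M$ and an edge from $R$ to $m_2$. This is exactly the edge playing the role of $e_1$ in the final switching $\sigma = (e_0, m_1, e_1, m_2)$: we need $c(e_1) = c(m_1) = c_1$ (yes), $e_1 \in E_G[m_2, V \setminus V(M)]$ (yes), and $e_0 \cap e_1 = \emptyset$ (ensured by choosing disjoint edges and $e_0$ avoiding the relevant vertices). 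Then $c_2 := c(m_2)$ and $\sigma = (e_0, m_1, e_1, m_2)$ is a genuine $(c_0, c_2)$-switching of length $2$, and crucially we have $\Theta(\delta n)$ independent choices for the pair $(e_1, m_2)$.

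**Then the counting for $C^\ast$:** fix the switching $\sigma$ (any one valid choice) and apply the hypothesis one last time to $c = c_2$ and this $\sigma$ of length $2$: there are at least $\lceil(1+\delta)n\rceil - 8$ disjoint $c_2$-edges between $V \setminus (V(M) \cup V(\sigma))$ and $V \setminus V(\sigma)$. If any such edge avoids $V(M)$ entirely (lies in $R$ minus a few vertices) we get a rainbow matching of size $n$ by performing the whole switching, contradiction; so all but $O(1)$ of them touch $V(M)$, in fact touch $m_2 \setminus e_1$ after discarding the $\leq n-2$ touching other matching edges — giving at least $\delta n - O(1) \geq \lceil \delta n/6 \rceil$ such edges. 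But we need these to hit $\lceil \delta n/6 \rceil$ \emph{distinct colours} $c \in C \setminus \{c_0, c_1, c_2\}$, not just distinct edges. This is where the freedom in choosing $\sigma$ comes in: an edge from $V \setminus (V(M) \cup V(\sigma))$ to $m_2 \setminus e_1$ of colour $c_1$ (or $c_0$) could be problematic; but $c_0$-edges or $c_1$-edges here are limited, and more to the point, if a colour $c$ appears on too few such edges we can absorb it — actually the cleanest route is: among the $\geq \delta n - O(1)$ disjoint $c_2$-edges to $m_2 \setminus e_1$, each has a colour; at most one edge can have colour $c_2$ (it would be parallel), the colours $c_0$ and $c_1$ contribute at most $O(1)$ each since those edges would extend $\sigma$ in a forbidden way or are bounded, and the remaining $\geq \delta n - O(1) \geq \lceil \delta n / 6\rceil$ disjoint edges — being pairwise disjoint at the $m_2 \setminus e_1$ end is impossible since they all share that vertex, so really "disjoint" must be read more carefully, or the multigraph structure (colour classes = disjoint cliques) forces distinct colours. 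The main obstacle I anticipate is precisely this last point: converting "many edges into a single vertex $m_2 \setminus e_1$" into "many distinct colours," which must exploit that each colour class is a disjoint union of cliques, so at most one clique of each colour meets that vertex and hence the edges into it of a fixed colour form a star inside one clique — I would need to track exactly how the clique structure bounds multiplicity and then choose $\sigma$ (using its $\Theta(\delta n)$-fold freedom) to avoid the bad colours, landing the final bound $|C^\ast| = \lceil \delta n / 6 \rceil$ with the factor $6$ exactly accounting for the several $O(\delta n)$ losses along the way.
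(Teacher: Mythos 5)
Your construction of the switching itself is fine in outline, but the argument collapses exactly at the point you flag as ``the main obstacle'': producing $\lceil\delta n/6\rceil$ \emph{distinct colours} with edges into the single vertex $m_2\setminus e_1$. Applying the lemma's hypothesis to the colour $c_2$ and the length-2 switching $\sigma$ gives you many disjoint edges \emph{all of colour $c_2$}; since they are pairwise disjoint, at most one of them can even contain the vertex $m_2\setminus e_1$, and none of them can contribute a colour to $C^\ast\subseteq C\setminus\{c_0,c_1,c_2\}$. The hypothesis only ever speaks about one colour at a time, so no single application of it can yield many colours at one vertex. Your fallback — invoking the clique structure of colour classes — is also unavailable here: Lemma~\ref{thm:main3} and Claim~\ref{claim:first} are proved for arbitrary edge-coloured multigraphs, and the union-of-cliques property is only used later, when deducing Theorem~\ref{thm:main2} from the lemma.

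The missing idea is a double count. The paper first shows, for each colour $c$ in a large set $C_1$ (colours reachable by a length-1 switching), that at least roughly $\delta n$ matching edges are \emph{$c$-good}, meaning they receive two disjoint $c$-edges from $R$; this uses the fact that $\lceil(1+\delta)n\rceil-4$ disjoint $c$-edges must land on only $n-1$ matching edges, forcing many matching edges to absorb two of them. Counting the pairs $(c,m)$ with $m$ being $c$-good then produces a single edge $m_2=\{x,y\}$ that is $c$-good for $\Omega(\delta n)$ colours simultaneously; the two disjoint edges $x_c,y_c$ per colour (one into $x$, one into $y$) are what let one endpoint serve as the target of $e_1$ while the other endpoint collects the $\lceil\delta n/6\rceil$ differently coloured edges $e_c$. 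A final case distinction (depending on whether some vertex of $R$ carries a third of the edges $x_c$) is needed to choose $e_1$ so that enough of these edges avoid $V(\sigma)$. Without the $c$-good notion and the double count, your greedy choice of $m_2$ gives no control over how many colours reach it, so the claim as stated cannot be recovered from your argument.
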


In Figure~\ref{fig:claim1} we illustrate the switching, the set of colours and the edges that are guaranteed by Claim~\ref{claim:first}. 

\begin{figure}[h]
\begin{center}
\includegraphics[scale=0.7]{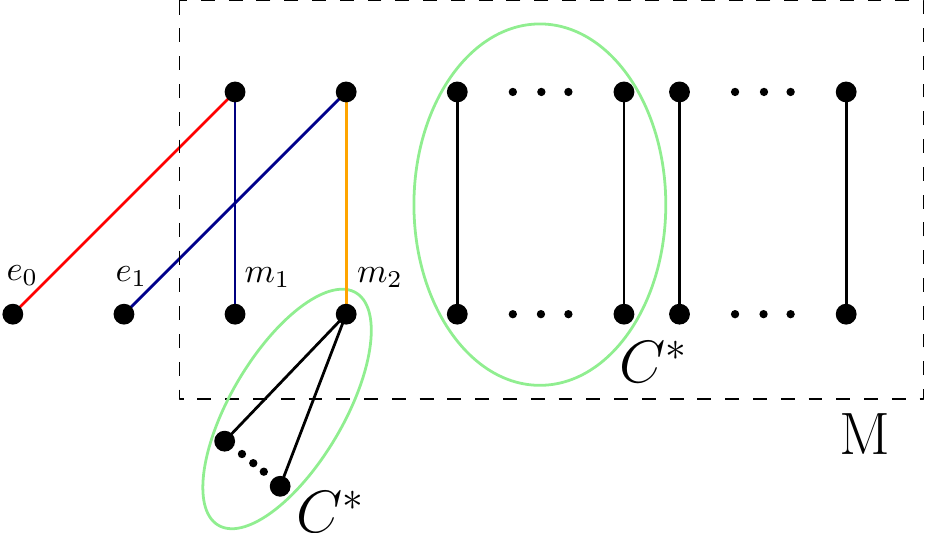} 
\caption{Switching $(e_0,m_1,e_1,m_2)$, set $C^\ast\subseteq C\setminus \{c_0,c_1,c_2\}$ and $c$-edges in $V\setminus \big(V(M)\cup V(\sigma)\big)$ with $c\in C^\ast$. }
\label{fig:claim1}
\end{center}
\end{figure}

\begin{claimproof}[Proof of Claim~\ref{claim:first}]
Let $C_1:= \big\{c\in C: \exists (c_0,c)\text{-switching of length }1\big\}$. 
By the assumption of the lemma, there exist $\lceil(1+\delta)n\rceil$ disjoint $c_0$-edges having an endvertex in $R$. 
If there exists a $c_0$-edge $e\in E_G[R,R]$, then $M \cup \{e\}$ is a rainbow matching of size $n$. 
Therefore we may assume that all $c_0$-edges from $R$ end in $V(M)$, which implies that $|C_1|\geq \lceil(1+\delta)n\rceil/2$. 

For every $c\in C_1$, let $\sigma_c=(e^c_0, m_1^c)$ be an arbitrary but fixed $(c_0,c)$-switching of length $1$. 
For a colour $c\in C_1$, we say that an edge $m\in M$ is $c$-good if there exist two disjoint $c$-edges in $E_G\big[R\setminus V(\sigma_c), m\big]$. 
By the assumption of the lemma, for every $c\in C_1$, there exist $\lceil(1+\delta)n\rceil-4$ disjoint $c$-edges in $E_G\big[R\setminus V(\sigma_c),V\setminus V(\sigma_c)\big]$.
If there exists a colour $c\in C_1$ and a $c$-edge $e \in E_G\big[R\setminus V(\sigma_c), R\setminus V(\sigma_c)\big]$, then there is a rainbow matching of size $n$, namely the union of  
the subset of $M$ induced by the colours in $C\setminus \{c\}$, and the edges $e$ and $e_0^c$.  
Therefore we may assume that for every $c\in C_1$, there exist $\lceil(1+\delta)n\rceil-4$ disjoint $c$-edges in $E_G\big[R\setminus V(\sigma_c), V(M)\setminus V(\sigma_c)\big]$. 

The maximum number of disjoint $c$-edges  in $E_G\big[R\setminus V(\sigma_c), V(M)\setminus V(\sigma_c)\big]$ is less than twice the number of $c$-good edges $m\in M$ plus the number of edges $m\in M$ which are not $c$-good.
Hence, for every $c\in C_1$, there exist at least $\lceil\delta n\rceil-2$ edges in $M\setminus m(\sigma_c)$ that are $c$-good. Next we find an edge $m$ which is $c$-good  for many colours $c\in C_1$.
Let 
\[\mu:= \max_{m\in M} \{|C'|: C' \subseteq C_1\setminus \{c(m)\} \text{ such that } m \text{ is } c\text{-good for each } c \in C'\}.\]
Double counting the pairs $(c,m)$, where $c\in C_1\setminus \{c(m)\}$ and $m$ is a $c$-good edge, yields
\[\mu |M| \geq |C_1|\big(\lceil\delta n\rceil -2\big)\]
and hence
\[\mu \geq \frac{(\delta n-2)(1+\delta)}{2}.\]
This means that  there exists an edge $m_2=\{x,y\} \in M$ and a subset $C_2 \subseteq C_1\setminus\{c(m_2)\}$ of size $\lceil(\delta n -2)(1+\delta)/2\rceil$ such that $m_2$ is $c$-good for every $c\in C_2$. 
For every $c\in C_2$, let $x_c, y_c \in E_G\big[R\setminus V(\sigma_c),m_2\big]$ be disjoint edges of colour $c$ such that $x_c\cap m _2= \{x\}$ and $y_c\cap m_2 = \{y\}$ (such edges exist since $m_2$ is $c$-good). Let $X:= \{x_c: c\in C_2\}$ and $Y:=\{y_c: c\in C_2\}$. 
The remainder of the proof is split into the case that there exists a vertex in $R$ that is incident to at least $1/3$ of the edges in $X$ and the case that there does not exist such a vertex. 

First suppose that  there exists a vertex $v\in R$ such that $v$ is incident to at least $1/3$ of the edges in $X$.
Using $\delta \sqrt{n} \geq 12$ and $\delta\leq 1$, notice that $|X|/3=|C_2|/3\geq  (\delta n -2)(1+\delta)/6\geq \lceil\delta n/6\rceil+1$.
Therefore, we can let $X'$ be a subset of $X$ consisting of $\lceil\delta n/6\rceil+1$ edges such that $v \in e$ for every $e\in X'$. Let $e_1'$ be any edge in $X'$. 
Since $c(e_1')\in C_2$, there is an edge $e_1 \in Y$ with $c(e_1') = c(e_1)$. 
By the definition of $X$ and $Y$, we also have $e_1\cap e_1' = \varnothing$ and $V(\sigma_{c(e_1)}) \cap e_1' = \varnothing$, which imply $X' \subseteq E_G\big[R\setminus \big(V(\sigma_{c(e_1)})\cup e_1\big), x\big]$. 
Set $c_1:= c(e_1)$, $c_2:=c(m_2)$, $e_0:=e_0^{c_1}$, and $m_1:= m_1^{c_1}$. 
We show that the set $C^\ast:=  \{c\in C_2: x_c\in X'\setminus\{e_1'\} \}$, the sequence $\sigma:=(e_0,m_1,e_1,m_2)$, and edges $e_c:=x_c$ for each $c\in C^\ast$ are as desired in the claim. First let us argue that $\sigma$ is indeed a $(c_0,c_2)$-switching. 
Property~(S1) is fulfilled since $m_1 \in M$ as $(e_0,m_1)$ is a switching and since $m_2 \in M$ by the choice of $m_2$. Property~(S2) holds since $(e_0,m_1)$ is a switching and since $e_1\cap m_2 = \{y\} \neq \varnothing$ and $e_1 \cap R \neq \varnothing$ by the definition of $Y$. 
As $c_0$ is not assigned to edges in $M$, we have $c(m_1)\neq c(e_0)\neq c(m_2)$. 
Moreover, we have $c(m_1) = c(e_1)$ by construction. 
This shows Property~(S3). 
Finally, Property~(S4) is satisfied since we have $e_1 \in E_G\big[R\setminus V(\sigma_{c_1}), m_2\big]$ by definition of $e_1\in Y$, and hence $e_0\cap e_1 = \varnothing$. 
Thus, $\sigma$ is indeed a $(c_0,c_2)$-switching. 
Observe that $C^\ast \subseteq C_2\setminus \{c_1\} \subseteq C_1\setminus \{c_1,c_2\} \subseteq C\setminus \{c_0,c_1,c_2\}$. 
Finally, for each $c\in C^\ast \subseteq C$, we have $e_c= x_c \in E_G\big[V\setminus\big(V(M)\cup V(\sigma)\big), x\big] = E_G\big[V\setminus\big(V(M)\cup V(\sigma)\big), m_2\setminus e_1\big]$.

Now suppose that all vertices in $R$ are incident to at most $1/3$ of the edges in $X$. 
Let $e_1$ be any edge in $Y$. Then at least $1/3$ of the edges in $X$ are disjoint from $e_1$ and $\sigma_{c(e_1)}$. Since, as before, $|X|/3\geq (\delta n -2)(1+\delta)/6\geq \lceil\delta n/6\rceil+1$ we can choose a subset $X^\ast\subseteq X$ of size $\lceil\delta n/6\rceil$ such that for every $e\in X^\ast$ we have $c(e) \neq c(e_1)$ and $e\cap \big(e_1\cup \sigma_{c(e_1)}\big) = \varnothing$. 
Set again $c_1:=c(e_1)$, $c_2:= c(m_2)$, $e_0:= e_0^{c_1}$, and $m_1:=m_1^{c_1}$. Analogously to the previous case, the set $C^\ast := \{c\in C_2: x_c\in X^\ast\}$, the sequence $\sigma:= (e_0,m_1,e_1,m_2)$, and the edges $e_c:=x_c$ for each $c\in C^\ast$ are as desired in the claim. 
\end{claimproof}

From now on we may assume the existence of the switching $\sigma=(e_0,m_1,e_1,m_2)$, the set $C^\ast\subseteq C$ and the edges
$e_c$ as in Claim~\ref{claim:first}. We consider the following sets.
Let 
\begin{align*}
W &:= \{e\in M:\ c(e)\in C^\ast\},\\
M' &:= M\setminus (m(\sigma)\cup W),\\
C'&:= C\setminus (C^\ast \cup \{c_0,c_1\} ).
\end{align*}
Observe that $C'$ is exactly the set of all colours assigned to the edges of $M'$ plus colour $c_2$. 
Note further that $|W|=|C^\ast|=\lceil\delta n/6\rceil$. Moreover, we set
\begin{align*}
n' &:=|C'|=\lfloor n(1-\delta/6)\rfloor -2,\\
S &:= \bigcup_{c\in C^\ast}e_c\cap R,\\
R' &:= R\setminus (V(\sigma) \cup S).
\end{align*}

See Figure~\ref{fig:sets} for an illustration of the sets $M', W \subset M$, the set $S \subset R$ and the switching $(e_0,m_1,e_1,m_2)$.

\begin{figure}[h]
\begin{center}
\includegraphics[scale=0.7]{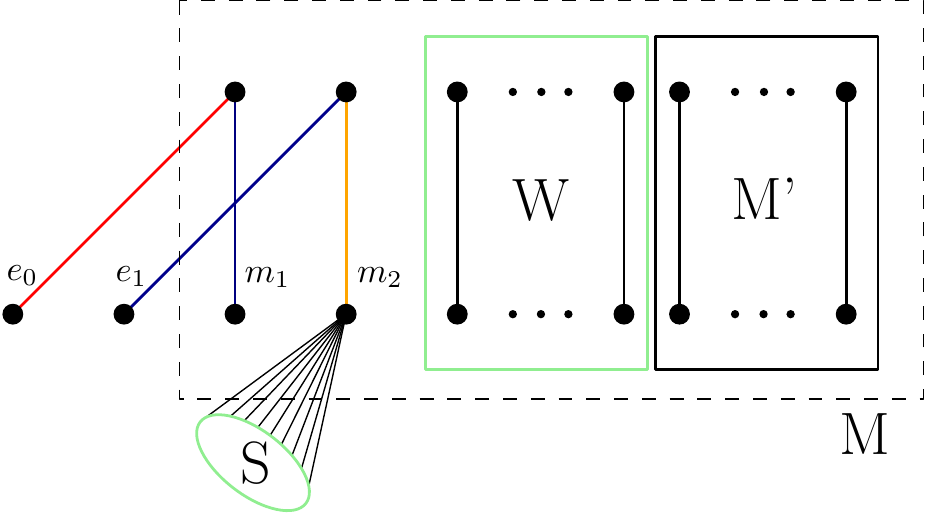} 
\caption{Sets $M', W \subset M$, set $S \subset R$ and switching $(e_0,m_1,e_1,m_2)$. }
\label{fig:sets}
\end{center}
\end{figure}

To apply induction, we now consider the edge-coloured multigraph $G'$ formed from $G$ by deleting edges of colours from $C^\ast\cup \{c_0, c_1\}$ and vertices in $\big(V(\sigma)\cup S\cup V(W)\big)$. Formally, let $G'=(V',E')$ be the multigraph with vertex set
\[V':=V\setminus (V(\sigma)\cup S\cup V(W))=R'\cup V(M')\] and edge set
\[E':=\{e\in E_G[V']:\ c(e)\in C'\}.\] The edges of $G'$ keep the colours they had in $G$.

With this notation in hand, we prove the following claim.

\begin{claim}\label{claim:second}
There is a constant  $\delta'\geq 12/\sqrt{n'}$ such that 
 for every colour $c\in C'$  and $(c_2,c)$-switching $\sigma$ (in $G'$, with respect to $M'$) there are at least $\big(\lceil(1+\delta')n'\rceil-4\ell(\sigma)$ disjoint $c$-edges between
 $V'\setminus \big(V(M') \cup V(\sigma')\big)$ and $V'\setminus V(\sigma')$.
\end{claim}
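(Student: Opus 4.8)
The goal is to verify that the reduced multigraph $G'$, with rainbow matching $M'$ of size $n'$ and missing colour $c_2$, still satisfies the hypothesis of Lemma~\ref{thm:main3}, so that induction applies. The key idea is that a $(c_2,c)$-switching $\sigma'$ in $G'$ can be \emph{extended} to a $(c_0,c)$-switching $\hat\sigma$ in the original graph $G$ by prepending the piece of the switching $\sigma=(e_0,m_1,e_1,m_2)$ from Claim~\ref{claim:first}: since $\sigma'$ starts at colour $c_2 = c(m_2)$, and $m_2$ together with $e_1$ (a $c_1$-edge from $R$ to $m_2$) and $e_0$ (a $c_0$-edge) form the initial segment, one obtains $\hat\sigma = (e_0, m_1, e_1, m_2, e_2', m_3', \dots)$ where $(e_2', m_3', \dots)$ comes from $\sigma'$. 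First I would check carefully that this concatenation is a legitimate $(c_0,c)$-switching in $G$: properties (S1)--(S3) are inherited because the colours $c_0, c_1, c_2$ and $C^\ast$ were all removed when forming $G'$, so $\sigma'$ uses none of them and there is no colour collision; property (S4) (pairwise disjointness of the ``outer'' edges $e_i$) requires that the outer edges of $\sigma'$ avoid $e_0$ and $e_1$, which holds because $\sigma'$ lives in $V' = V \setminus (V(\sigma)\cup S \cup V(W))$, hence avoids $V(\sigma) \supseteq e_0 \cup e_1$. Thus $\ell(\hat\sigma) = \ell(\sigma') + 2$.

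Next I would apply the hypothesis of Lemma~\ref{thm:main3} to $\hat\sigma$ in $G$: there are at least $\lceil(1+\delta)n\rceil - 4\ell(\hat\sigma) = \lceil(1+\delta)n\rceil - 4\ell(\sigma') - 8$ disjoint $c$-edges between $V\setminus(V(M)\cup V(\hat\sigma))$ and $V\setminus V(\hat\sigma)$. I then need to convert this into a bound for $G'$, i.e.\ count such edges that survive in $G'$: I must discard those meeting the deleted vertex set $S \cup V(W)$ (the vertices $V(\sigma)$ are already excluded since $V(\sigma)\subseteq V(\hat\sigma)$). Since the $c$-edges in question are pairwise disjoint, each deleted vertex kills at most one of them, so at most $|S| + |V(W)| \leq |C^\ast| + 2|W| = 3\lceil\delta n/6\rceil$ edges are lost. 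I also need to relate $V(M)\cup V(\hat\sigma)$ to $V(M')\cup V(\sigma')$ within $V'$: a vertex of $V'$ lying in $V(M)\cup V(\hat\sigma)$ but not in $V(M')\cup V(\sigma')$ would have to lie in $V(M\setminus M')$ or in the first four edges of $\hat\sigma$; but $V(M\setminus M') = V(m(\sigma))\cup V(W)$ and $V$ of the first four edges of $\hat\sigma$ are both contained in $V(\sigma)\cup S\cup V(W)$, hence outside $V'$ — so actually the two conditions coincide on $V'$, and no further edges are lost on this account. Combining, $G'$ has at least
$\lceil(1+\delta)n\rceil - 4\ell(\sigma') - 8 - 3\lceil\delta n/6\rceil$
disjoint $c$-edges between $V'\setminus(V(M')\cup V(\sigma'))$ and $V'\setminus V(\sigma')$.

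It remains to choose $\delta'$ and check the two required inequalities: that $\lceil(1+\delta)n\rceil - 8 - 3\lceil\delta n/6\rceil \geq \lceil(1+\delta')n'\rceil$ (so the edge count is at least $\lceil(1+\delta')n'\rceil - 4\ell(\sigma')$), and that $\delta'\sqrt{n'}\geq 12$. Recalling $n' = \lfloor n(1-\delta/6)\rfloor - 2$, a natural choice is something like $\delta' := \delta - C/\sqrt n$ for a suitable absolute constant, or one can solve $(1+\delta')n' = (1+\delta)n - \delta n/2 - \bigO(1)$ directly for $\delta'$; the slack of order $\delta n/2 - \delta n/6 = \delta n/3$ between $(1+\delta)n$ and $(1+\delta')n'$ is what makes room for the losses and keeps $\delta'$ bounded below. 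I expect the routine-but-delicate arithmetic of \textbf{verifying $\delta'\sqrt{n'}\geq 12$} to be the main obstacle: one has $\delta'$ slightly smaller than $\delta$ while $n'$ is smaller than $n$ by a factor roughly $1-\delta/6$, so the hypothesis $\delta\sqrt n \geq 12$ must be shown to propagate with enough margin; this is precisely where the assumption $n > 144$ and $\delta \leq 1$ (hence $\delta\sqrt{n}\ge 12$ forces $n \ge 144$, giving extra slack) will be used, and the constant $144 = 12^2$ in $n_0(\delta) = 144/\delta^2$ is calibrated for exactly this step. The floor and ceiling functions will need to be handled with a little care but contribute only $\bigO(1)$ corrections that are absorbed by the $\delta n/3$ slack.
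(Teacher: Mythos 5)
Your setup is right — the concatenation of $\sigma$ with $\sigma'$ into a $(c_0,c)$-switching of length $\ell(\sigma')+2$, the application of the hypothesis of Lemma~\ref{thm:main3} in $G$, and the observation that $V(M)\cup V(\hat\sigma)$ and $V(M')\cup V(\sigma')$ agree on $V'$ are all exactly what the paper does. But there is a genuine gap at the step where you handle the deleted vertices: you propose to simply \emph{discard} every $c$-edge whose right endpoint lies in $S\cup V(W)$, losing up to $|S|+|V(W)|=3\lceil\delta n/6\rceil\approx\delta n/2$ edges. This is too lossy for the induction to close. With that loss the surviving excess is $\delta'n'\approx \lceil(1+\delta)n\rceil - n' - \delta n/2 - \bigO(1)\approx \tfrac{2}{3}\delta n$, so in the extremal case $\delta\sqrt n=12$ you get $\delta'\sqrt{n'}\leq \tfrac{2}{3}\delta\sqrt n\approx 8<12$. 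The arithmetic you flag as ``routine-but-delicate'' is not merely delicate: it fails, and no choice of $\delta'$ can satisfy both $\lceil(1+\delta)n\rceil-3\lceil\delta n/6\rceil-\bigO(1)\geq\lceil(1+\delta')n'\rceil$ and $\delta'\sqrt{n'}\geq 12$. The assumption $n>144$ does not create the extra slack you hope for, since $\delta$ may equal $12/\sqrt n$ exactly.

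The missing idea — and the reason $W$ and $S$ were constructed as they were in Claim~\ref{claim:first} — is that one must show a $c$-edge $e$ from $R'\setminus V(\sigma')$ into $S\cup V(W)$ \emph{cannot exist}, because it would produce a rainbow matching of size $n$ in $G$, contradicting maximality. If $e$ hits $S$, then $(M\setminus m(\sigma''))\cup e(\sigma'')\cup\{e\}$ is such a matching. If $e$ hits some $f\in W$, one reroutes the colour $c(f)\in C^\ast$ through its backup edge $e_{c(f)}\in E_G\big[S, m_2\setminus e_1\big]$ (whose endpoint $x$ is freed because $m_2\in m(\sigma'')$ is removed), giving $(M\setminus(m(\sigma'')\cup\{f\}))\cup e(\sigma'')\cup\{e,e_{c(f)}\}$. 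With these edges excluded, only the $\bigO(1)$ edges meeting $V(\sigma)$ and the at most $|S|=\lceil\delta n/6\rceil$ edges whose $R$-endpoint lies in $S$ are lost, and the latter is exactly compensated by $n-n'$; this yields $\delta'n'=\lceil\delta n\rceil-12$, which does satisfy $\delta'\sqrt{n'}\geq 12$.
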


\begin{claimproof}[Proof of Claim~\ref{claim:second}]
Set $\delta'=(\lceil\delta n\rceil - 12)/n'$ and notice that the following holds:
$$
\delta'n'\geq \delta n -12\geq 12(\sqrt{n}-1)\geq 12\sqrt n(1-\delta/12)\geq 12\sqrt{n}\sqrt{1-\delta/6} > 12\sqrt{n'}.
$$
The second and third inequalities use $\delta\geq 12/\sqrt{n}$. Therefore $\delta'\geq 12/\sqrt{n'}$ holds, and so we can apply induction to the multigraph $G'$ with the matching $M'$.
Consider any colour $c\in C'$ and let $\sigma'$ be some $(c_2,c)$-switching in $G'$.
We need to show that
are at least $\big(\lceil(1+\delta')n'\rceil-4\ell(\sigma')\big)$ disjoint $c$-edges in $E_{G'}\big[R'\setminus V(\sigma'),V'\setminus V(\sigma')\big]$. 
Recall that $\sigma$ is the $(c_0,c_2)$-switching, given by Claim~\ref{claim:first}.
Then the concatenation of $\sigma$ and $\sigma'$ gives a $(c_0,c)$-switching $\sigma''$ (in $G$ w.r.t.~$M$) of length $\ell(\sigma')+2$. So, by the assumption of Lemma~\ref{thm:main3} on $G$, we can find 
at least $\lceil(1+\delta)n\rceil-4\ell(\sigma'')= \lceil(1+\delta)n\rceil-4(\ell(\sigma')+2)$ disjoint $c$-edges in $E_G\big[R\setminus V(\sigma''),V\setminus V(\sigma'')\big]$. 
As $|R\setminus V(\sigma'')|-|R'\setminus V(\sigma'')|=|S|\leq \lceil\delta n/6\rceil$, 
at least $\lceil(1+\delta)n\rceil - \lceil\delta n/6\rceil-4(\ell(\sigma')+2)$ of these disjoint edges
belong to $E_G\big[R'\setminus V(\sigma''),V\setminus V(\sigma'')\big]\subseteq E_G\big[R'\setminus V(\sigma'),V\setminus V(\sigma')\big]$. 

Assume first that there is no edge $e\in E_G\big[R'\setminus V(\sigma'),V(W)\cup S\big]$ with $c(e)=c$.
Then, since at most 6 disjoint edges intersect $V(\sigma)$, the claim holds since the number of $c$-edges 
in $E_{G'}\big[R'\setminus V(\sigma'),V'\setminus V(\sigma')\big]$ is at least
\begin{align*}
\lceil\left(1+\delta\right)n\rceil -\left\lceil\tfrac{\delta n}{6}\right\rceil - 4(\ell(\sigma')+2)-6
= (1+\delta')n'-4\ell(\sigma').
\end{align*}

Assume then that there is an edge $e\in E_G\big[R'\setminus V(\sigma'),V(W)\cup S\big]$ with $c(e)=c$.
If $e\cap S\neq \varnothing$, then $(M\setminus m(\sigma''))\cup (\{e\}\cup e(\sigma''))$ is a rainbow matching of size $n$ in $G$. Otherwise, if $e\cap V(W)\neq \varnothing$,
then let $f\in W$ with $f\cap e\neq \varnothing$. By definition of $W$ and $S$, and by Claim \ref{claim:first}
we find an edge $g\in E_G(S,m_2\setminus e_1)$
with $c(g)=c(f)$. Then $(M\setminus (m(\sigma'')\cup \{f\}))\cup (e(\sigma'')\cup \{e,g\})$ is a rainbow matching of size $n$ in $G$, contradicting our assumption that $M$ was maximum.
\end{claimproof}

Now we are able to finish the induction. By Claim~\ref{claim:second}, $G'$ satisfies the hypothesis of Lemma~\ref{thm:main3}. Therefore, since $n'<n$, by induction $G'$ contains a rainbow matching $M''$ of size $n'$.
Now, $M''\cup W\cup e(\sigma)$ forms a rainbow matching of size $n$, contradicting our assumption there were no rainbow matchings in $G$ of this size.
\end{proof}

Finally, we are ready to prove Theorem~\ref{thm:main2}. 

\begin{proof}[Proof of Theorem~\ref{thm:main2}]
Let $\delta>0$ and $n\geq 144/\delta^2$ and let $G$ be given according to the theorem. For the sake of  contradiction, let us assume that a largest rainbow matching $M$ in $G$ has size smaller than $n$. Let $C'$ be the set of colours in $M$ plus one further colour $c_0$. Set $n':=|C'|$. In the following, we consider the multigraph $G'=(V,E')$ with $E'=\{e\in E: c(e)\in C'\}$. We now apply Lemma~\ref{thm:main3} to $G'$ in order to find a rainbow matching of size $n'$.  This gives a contradiction since we assumed  that $M$  was  a maximum matching.

Let $\delta'=\delta n/n'$
and observe that from $n\geq 144/\delta ^2$, we have $\delta'\geq 12/\sqrt{n'}$. Let $c\in C'$ and let $\sigma$ be any $(c_0,c)$-switching in $G'$ with respect to $M$. 
By assumption on $G$,
there exist $\lceil(3+\delta)n\rceil-|V(M)\cup V(\sigma)|>\lceil(1+\delta)n\rceil-\ell(\sigma)>\lceil(1+\delta')n'\rceil-\ell(\sigma)$
vertices in $V\setminus \big(V(M)\cup V(\sigma)\big)$ that are incident to colour $c$.
If in the colour class of $c$ any two of these vertices are adjacent or have a common neighbour, then since all the colour classes in $G$ are unions of cliques, 
there is an edge, say $e$, of colour $c$ between them, which leads to the rainbow matching
$(M\setminus m(\sigma))\cup (e(\sigma)\cup \{e\})$ of size $n'$. So, we may assume that there are
$\big(\lceil(1+\delta')n'\rceil-\ell(\sigma)\big)$ disjoint edges of colour $c$ in $E_{G'}\big[V\setminus \big(V(M)\cup V(\sigma)\big),V\big]$
 and therefore $\big(\lceil(1+\delta')n'\rceil-4\ell(\sigma)\big)$ such edges in $E_{G'}\big[V\setminus \big(V(M)\cup V(\sigma)\big),V\setminus V(\sigma)\big]$. As $c$ and $\sigma$ were chosen arbitrarily, Lemma~\ref{thm:main3} now guarantees that $G'$ contains a rainbow matching of size $n'$.  
\end{proof}

\section{Concluding Remarks}

We wonder how the problem changes if one adds the natural constraint that every pair of distinct elements belongs to at most one equivalence relation. More precisely, we are interested in the following problem. 

\begin{problem}
Determine the minimal number $\vv^\ast(n)$ such that if $A_1,\ldots, A_n$ are equivalence relations on a set $X$ with $|\ker(A_i)| \geq \vv^\ast(n)$ and $A_i\cap A_j = \big\{(x,x):x\in X\big\}$ for each $i\neq j\in [n]$,  
then $A_1, \ldots, A_n$ contain a rainbow matching.  
\end{problem}

Using the graph theoretic notion as before, the additional constraint means that the colour classes need to be pairwise disjoint. This can also be seen as restricting the problem to graphs instead of multigraphs. It is known that for every even $n$, there exists an edge-coloured bipartite graph, the colour classes of which are matchings of size $n$, without a rainbow matching of size $n$. This follows from a result by Euler on transversals in Latin squares (see e.g.~\cite[page 263]{wilson2013}). For general $n$ we thus obtain $\vv^\ast(n) > 2n-2$. An upper bound on $\vv^\ast(n)$ follows directly from Theorem~\ref{thm:main2}, i.e.~$\vv^\ast(n) \leq \vv(n) = 3n + o(n)$.  

\bibliographystyle{abbrv} 	 
\bibliography{RainbowMatchings}

\begin{thebibliography}{10}

\bibitem{aharoni2009}
R.~Aharoni and E.~Berger.
\newblock Rainbow matchings in r-partite r-graphs.
\newblock {\em Electronic Journal of Combinatorics}, 16(1):R119, 2009.

\bibitem{brualdi1991}
R.~A. Brualdi and H.~J. Ryser.
\newblock {\em Combinatorial matrix theory}.
\newblock Cambridge University Press, 1991.

\bibitem{clemens2015}
D.~Clemens and J.~Ehrenm{\"u}ller.
\newblock An improved bound on the sizes of matchings guaranteeing a rainbow
  matching.
\newblock {\em arXiv:1503.00438}, 2015.

\bibitem{grinblat2002algebras}
L.~{\v{S}}. Grinblat.
\newblock {\em Algebras of sets and combinatorics}, volume 214.
\newblock American Mathematical Society, 2002.

\bibitem{grinblat2004theorems}
L.~{\v{S}}. Grinblat.
\newblock Theorems on sets not belonging to algebras.
\newblock {\em Electronic Research Announcements of the American Mathematical
  Society}, 10(6):51--57, 2004.

\bibitem{grinblat2015families}
L.~{\v{S}}. Grinblat.
\newblock Families of sets not belonging to algebras and combinatorics of
  finite sets of ultrafilters.
\newblock {\em Journal of Inequalities and Applications}, 2015(1):1--19, 2015.

\bibitem{haggkvist2008}
R.~H{\"a}ggkvist and A.~Johansson.
\newblock Orthogonal latin rectangles.
\newblock {\em Combinatorics, Probability and Computing}, 17(04):519--536,
  2008.

\bibitem{nivasch2015rainbow}
G.~Nivasch and E.~Omri.
\newblock Rainbow matchings and algebras of sets.
\newblock {\em arXiv:1503.03671}, 2015.

\bibitem{pokrovskiy2015}
A.~Pokrovskiy.
\newblock Rainbow matchings and rainbow connectedness.
\newblock {\em arXiv:1504.05373}, 2015.

\bibitem{ryser1967}
H.~J. Ryser.
\newblock Neuere {P}robleme der {K}ombinatorik.
\newblock {\em Vortr{\"a}ge {\"u}ber {K}ombinatorik, {O}berwolfach}, pages
  69--91, 1967.

\bibitem{stein1975}
S.~K. Stein.
\newblock Transversals of {L}atin squares and their generalizations.
\newblock {\em Pacific Journal of Mathematics}, 59(2):567--575, 1975.

\bibitem{wilson2013}
R.~Wilson, J.~J. Watkins, and R.~Graham.
\newblock {\em Combinatorics: ancient \& modern}.
\newblock Oxford University Press, 2013.

\end{thebibliography}

\begin{appendix}
\section{}
Here we show that two versions of the problem studied in this paper are equivalent. Specifically we show that $\vv(n)= \vv_1(n)$, where $\vv(n)$ and $\vv_1(n)$ are as defined in the introduction.

First we show that $\vv(n)\geq \vv_1(n)$ holds. Let $A_1,\ldots,A_n$ be equivalence relations with $|\ker(A_i)|\geq \vv(n)$ for each $i\in [n]$. Let ${\mathcal A}_i:=\left\{\bigcup_{x\in S}[x]_{A_i}:\ S\subseteq X\right\}$ for each $i\in [n]$. It can be seen easily that $\mathcal A_1, \ldots, \mathcal A_n$ are algebras. For each of the at least $\vv(n)$ elements $x\in \ker(A_i)$ it holds that $\{x\}\in {\mathcal P}(X)\setminus \mathcal A_i$ . In particular, by the definition of $\vv(n)$, we find a family $\{U_i^1,U_i^2\}_{i\in [n]}$ such that if $Q\in \mathcal P(X)$ and $Q$ contains one of the two sets $U_i^1$ and $U_i^2$ and its intersection with the other one is empty, then $Q\notin \mathcal A_i$. For every $i\in [n]$, we now choose $Q_i\in {\mathcal A}_i$ to be the inclusion minimal set satisfying $U_i^1\subseteq Q_i$, and we note that $U_i^2\cap Q_i\neq \emptyset$ is implied. By the minimality of $Q_i$, it turns out that 
every equivalence class of $A_i$ that is contained in $Q_i$ needs to intersect $U_i^1$, and thus there is at least one such class $[z_i]_{A_i}$ intersecting both $U_i^1$ and $U_i^2$. Choosing arbitrary elements $x_i\in [z_i]_{A_i}\cap U_i^1$ and $y_i\in [z_i]_{A_i}\cap U_i^2$ for every $i\in [n]$ finally leads to a rainbow matching as desired.

To prove $\vv(n)\leq \vv_1(n)$, we need to argue that for every algebras $\mathcal A_1, \ldots, \mathcal A_n$ on a set $X$ with at least 
$\vv_1(n)$ pairwise disjoint sets in $\mathcal P(X) \setminus \mathcal A_i$, for each $i\in [n]$, there is a family $\{U_i^1,U_i^2\}_{i\in [n]}$ as described earlier. To do so, for each $i\in [n]$, we define equivalence relations $A_i$ on $X$ the equivalence classes of which are the inclusion minimal sets in $\mathcal A_i$. As, by the properties of an algebra, for every set $B\in \mathcal P(X)\setminus \mathcal A_i$ there is at least one element $b\in B$ with $\{b\}\notin \mathcal A_i$,
we conclude $|\ker(A_i)|\geq \vv_1(n)$, for every $i\in [n]$. Thus, by definition of $\vv_1(n)$,
we find a rainbow matching $x_1,y_1,\ldots,x_n,y_n$ as described above. Now, for every $i\in [n]$, let $U_i^1:=\{x_i\}$ and
$U_i^2:=\{y_i\}$. Then, whenever $U_i^j\subseteq Q$ holds for some $Q\in \mathcal P(X)$ and $j\in \{1,2\}$ we obtain $[y_i]_{A_i}=[x_i]_{A_i}\subseteq Q$, by definition of $A_i$, and thus $Q\cap U_i^{3-j}\neq \emptyset$. 
\end{appendix}

\end{document}